\renewcommand{\uppercasenonmath}[1]{}
\numberwithin{equation}{section} \theoremstyle{plain}
\newtheorem*{thm*}{Main Theorem}
\newtheorem{thm}{Theorem}[section]
\newtheorem*{cor*}{Corollary}
\newtheorem{lem}[thm]{Lemma}
\newtheorem*{lem*}{Lemma}
\newtheorem{prop}[thm]{Proposition}
\newtheorem*{prop*}{Proposition}
\newtheorem*{que*}{Question}
\newtheorem{rem}[thm]{Remark}
\newtheorem*{rem*}{Remark}
\newtheorem{exa}[thm]{Example}
\newtheorem*{exa*}{Example}
\newtheorem{df}[thm]{Definition}
\newtheorem*{df*}{Definition}
\newtheorem*{conj*}{Conjecture}
\newtheorem*{ack*}{ACKNOWLEDGEMENTS}
\begin{document}
\begin{center}
{\large  \bf ON COUNTABLY $\Sigma$-C2 RINGS}

\vspace{0.8cm} {\bf Liang Shen  and  Jianlong Chen\\
 {\it Department of Mathematics, Southeast University
 \\ Nanjing 210096, P.R. China}\\}

 {\it E-mail: lshen@seu.edu.cn, jlchen@seu.edu.cn }\\
\end{center}
\bigskip

\indent\indent {\it Let $R$ be a ring. $R$ is called a right countably $\Sigma$-C2 ring if every countable direct sum  copies of $R_{R}$ is a C2 module.   The following are equivalent for a ring $R$: (1) $R$ is a right countably $\Sigma$-C2 ring. (2) The column finite  matrix ring $\mathbb{C}\mathbb{F}\mathbb{M}_{\mathbb{N}}(R)$ is a right C2 (or C3) ring. (3) Every countable direct sum  copies of $R_{R}$ is a C3 module. (4) Every projective right $R$-module is a C2 (or C3) module. (5) $R$ is a right perfect ring and every finite direct sum  copies of $R_{R}$ is a C2 (or C3) module. This shows that right countably $\Sigma$-C2 rings are just the rings whose right finitistic projective dimension r$FPD(R)$=sup\{$Pd_{R}(M)|$ $M$ is a right $R$-module with $Pd_{R}(M)<\infty$\}=0, which were introduced by  Hyman Bass in 1960.}

\bigskip

 \noindent {\it Key Words:}\indent  countably $\Sigma$-C2 rings;  finitely $\Sigma$-C2 rings;
 perfect rings.

 \bigskip

 \noindent {\it(2010) Mathematics Subject Classification:} \indent {\rm Primary 16L30; Secondary 16E10.}

\section{\bf  INTRODUCTION}

\indent Throughout this paper rings are associative with
identity.  $\mathbb{N}$ is the set of natural numbers. $\mathbb{M}_{n}(R)$ denotes the ring of $n\times n$ matrices over a ring $R$. Let $\Lambda$ be an infinite set. $\mathbb{C}\mathbb{F}\mathbb{M}_{\Lambda}(R)$ denotes the column finite $card(\Lambda)\times card(\Lambda)$ matrix ring over a ring $R$, where $card(\Lambda)$ is the cardinality of $\Lambda$.   Let $M$ be a right $R$-module and $A$ be a set. $M^{(A)}$ means the direct sum of $card(A)$ copies of $M$.  We use $N\leq_{\oplus}M$ to show that  $N$ is a direct summand of $M$.  And use End($M$) to denote the ring of endomorphisms of $M$.  For a subset $X$ of a ring $R$, the left annihilator of $X$ in $R$ is ${\bf l}(X)=\{r\in R: rx=0$ for all $x\in X\}$. Right annihilators are defined
analogously. \\
\indent Recall that a right $R$-module $M$ is called a \emph{C2} module if every submodule that is isomorphic to a direct summand of $M$ is itself a direct summand of $M$. And $M$ is called a \emph{C3} module if whenever $N\leq_{\oplus}M$ and $K\leq_{\oplus}M$  such that $N\cap K=0$, then $N+K\leq_{\oplus}M$. It is well known that  a C2 module is always a C3 module and the converse is not true. A ring $R$ is called a \emph{right C2} ring if the right $R$-module $R_{R}$ is a C2 module. $R$ is called a  \emph{right finitely $\Sigma$-C2}  ring if  every finite direct sum copies of $R_{R}$ is a C2 module. Right finitely $\Sigma$-C2 rings are also called \emph{strongly right C2} rings (see \cite[Definition 2.2]{CL04}). In this article, we define a ring $R$ to be \emph{right countably $\Sigma$-C2} if $R^{(\mathbb{N})}_{R}$ is a C2 module. The left sides of the above definitions can be defined similarly.\\
 \indent It is proved in Theorem 2.7 that a ring $R$ is a right countably $\Sigma$-C2 ring if and only if $\mathbb{C}\mathbb{F}\mathbb{M}_{\mathbb{N}}(R)$ is a right C2 ring. At the end of the article, it is proved in Theorem 2.13 that a ring $R$ is a right countably $\Sigma$-C2 ring if and only if  $\mathbb{C}\mathbb{F}\mathbb{M}_{\Lambda}(R)$  is a right C2 (or C3) ring for every infinite set $\Lambda$ if and only if  every countable direct sum  copies of $R_{R}$ is a C3 module if and only if every projective right $R$-module is a C2 (or C3) module if and only if  $R$ is a right perfect ring and every finite direct sum  copies of $R_{R}$ is a C2 (or C3) module. This shows  that right countably $\Sigma$-C2 rings are just the rings satisfying  r\emph{FPD}$(R)$=0, which were characterized by Huyman Bass in \cite{B60}.

\section{\bf RESULTS }

\begin{df}\label{def 2.1}
{\rm A ring $R$ is called a right countably $\Sigma$-C2 ring if $R^{(\mathbb{N})}_{R}$
is a C2 module. Left countably $\Sigma$-C2 rings can be defined similarly. A ring $R$ is called a countably $\Sigma$-C2 ring if $R$ is left and right countably $\Sigma$-C2.}
\end{df}

The following Example 2.3 shows that left countably $\Sigma$-C2 rings may not be right countably $\Sigma$-C2.

\begin{lem}\label{lem 2.2}
{\rm(\cite[Lemma 2.3 (2, 3)]{CL04})}
A ring R is right finitely $\Sigma$-C2 if and only if {\bf r}$(I)\neq 0$ for any finitely generated proper left ideal I of R.
\end{lem}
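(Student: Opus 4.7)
The plan is to exploit, for each finitely generated left ideal $I = Ra_1 + \cdots + Ra_n$, the right $R$-linear map $\psi_I : R_R \to R^n_R$ given by $\psi_I(r) = (a_1 r, \ldots, a_n r)^T$. A direct calculation shows $\ker \psi_I = \bigcap_i {\bf r}(a_i) = {\bf r}(I)$, so this map converts the annihilator condition on $I$ into a splitting question for $\psi_I$ inside $R^n_R$.

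For the forward direction, suppose $R$ is right finitely $\Sigma$-C2 and $I$ is a finitely generated proper left ideal with ${\bf r}(I) = 0$. Then $\psi_I$ is injective, so $\psi_I(R) \leq R^n_R$ is isomorphic to $R_R$, which itself is a direct summand of $R^n_R$ (the first coordinate). The C2 property of $R^n_R$ then makes $\psi_I(R)$ a direct summand, yielding a splitting $\pi : R^n_R \to R_R$; writing $\pi$ as left multiplication by a row $(b_1, \ldots, b_n)$, the identity $\pi \psi_I = \mathrm{id}_R$ becomes $\sum_i b_i a_i = 1 \in I$, contradicting $I \neq R$.

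For the converse, assume every finitely generated proper left ideal of $R$ has nonzero right annihilator. Since $R^n_R$ is C2 if and only if the Morita-equivalent ring $M_n(R)$ is right C2, I would invoke the principal-ideal characterization of right C2 rings applied to $M_n(R)$: it suffices to show that every $A \in M_n(R)$ with $M_n(R)A$ a proper left ideal satisfies ${\bf r}_{M_n(R)}(A) \neq 0$. Translating between matrices and their row spaces, $M_n(R)A$ is proper iff the row space of $A$, regarded as a finitely generated left $R$-submodule $M$ of $R^n$, is a proper submodule of $R^n$, and ${\bf r}_{M_n(R)}(A) \neq 0$ iff some nonzero column vector $v \in R^n$ satisfies $Mv = 0$. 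Thus the task reduces to showing that every finitely generated proper left $R$-submodule of $R^n$ has a nonzero column annihilator.

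The main obstacle is this last reduction, bootstrapping the hypothesis from left ideals of $R$ to left submodules of $R^n$; the natural route is induction on $n$, with $n=1$ being the hypothesis. Given $M \leq R^n$ finitely generated and proper, project $M$ onto the last coordinate to get a finitely generated left ideal $\pi(M) \leq R$. If $\pi(M) \neq R$, the hypothesis supplies nonzero $r \in {\bf r}(\pi(M))$ and one takes $v = r e_n$. If $\pi(M) = R$, pick $m^* \in M$ whose last entry is $1$, and use elementary elimination on the given generators of $M$ to write $M = K' + Rm^*$ with $K' \leq R^{n-1}$ finitely generated. Properness of $M$ in $R^n$ forces $K'$ proper in $R^{n-1}$, so the inductive hypothesis yields nonzero $w \in R^{n-1}$ annihilating $K'$, and then the column vector $v = (w_1, \ldots, w_{n-1}, -\sum_{j<n} m^*_j w_j)^T$ simultaneously annihilates $K'$ and $m^*$, hence all of $M$.
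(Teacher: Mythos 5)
Your forward direction is correct: the map $\psi_I$ does convert ${\bf r}(I)=0$ into an embedded copy of $R_R$ inside $R^n_R$, and writing the splitting as a row $(b_1,\ldots,b_n)$ recovers $1=\sum b_ia_i\in I$. (The paper gives no proof of this lemma, only the citation to \cite{CL04}, so there is nothing to compare against except the standard argument.) The converse, however, has a genuine gap at the reduction step. The ``principal-ideal characterization of right C2 rings'' you invoke --- that a ring $S$ is right C2 provided every $A\in S$ with $SA$ proper satisfies ${\bf r}_S(A)\neq 0$ --- is false. That condition only says that every element with zero right annihilator is left invertible, i.e.\ that monomorphisms $S_S\to S_S$ split; the C2 condition also requires every right ideal isomorphic to $eS$ for an arbitrary idempotent $e$ (not just $e=1$) to be a summand. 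The ring $S$ of $2\times 2$ upper triangular matrices over a field is a counterexample: since $\mathbb{M}_n(S)$ is a finite-dimensional algebra, every non-left-invertible element of it has nonzero right annihilator for every $n$, yet $S$ is not right C2, because the right ideal $e_{12}S$ is isomorphic to the summand $e_{22}S$ (they have equal right annihilators) but lies in the radical and so cannot be generated by an idempotent. Thus even the ``for all $n$'' version of your claimed characterization fails, and the condition you reduce to does not by itself yield C2 of $R^n_R$.

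The salvageable part is that your induction is correct and in fact proves more than you use: it shows that every finitely generated proper left submodule of $R^{1\times n}$, generated by \emph{any} finite number of rows (so corresponding to rectangular, not just square, matrices), has a nonzero column annihilator. In contrapositive form this says that every monomorphism $R^n_R\to R^t_R$ splits. That statement does suffice, but you must add the bridge handling non-free summands: given $T\leq R^n_R$ with $T\cong ER^n$ for an idempotent $E\in\mathbb{M}_n(R)$, pass to $T'=T\oplus(1-E)R^n\leq R^n\oplus R^n$, which is the image of a monomorphism $R^n\to R^{2n}$; your rectangular result splits it, so $T\leq_{\oplus}R^{2n}$, and the modular law (using $T\subseteq R^n\oplus 0$) then gives $T\leq_{\oplus}R^n$. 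With that step supplied the converse is complete; without it the argument never addresses submodules isomorphic to proper direct summands of $R^n_R$, which is exactly where the C2 condition has content beyond the splitting of injective endomorphisms.
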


Recall that a ring $R$ is called a \emph{right Kasch} ring if {\bf l}$(I)\neq 0$ for any  proper right ideal $I$ of $R$, or equivalently, if every  maximal right ideal of $R$ is a right annihilator. Left Kasch rings can be defined similarly. By the above lemma, it is clear that every right (left) Kasch ring is  left (right) finitely $\Sigma$-C2.

\begin{exa}\label{exa 2.3}
{\rm  Let $k$ be a division ring, and let $R$ be the ring of matrices
\begin{center}
{$\gamma=\left[
\begin{array}{cccc}
a & 0 & b & c \\
0 & a & 0 & d \\
0 & 0 & a & 0 \\
0 & 0 & 0 & e
\end{array}
\right]$}
\end{center} over $k$. Then $R$ is left countably $\Sigma$-C2 but not right countably $\Sigma$-C2.
}
\end{exa}
\begin{proof}
By \cite[Example 8.29 (6)]{L98}, $R$  is an artinian ring that is right Kasch but not left Kasch. Since  $R$ is artinian and  right Kasch,  $R$ is left perfect and left finitely $\Sigma$-C2.  Then by Theorem 2.13, $R$ is left countably $\Sigma$-C2. Next we show that  $R$ is not right countably $\Sigma$-C2. If $R$ is right countably $\Sigma$-C2,  by Theorem 2.13, $R$ is right finitely $\Sigma$-C2. Thus, by Lemma 2.2, for any finitely generated proper left ideal $I$ of $R$, {\bf r}($I$)$\neq 0$. Since $R$ is artinian, any proper left ideal of $R$ is finitely generated. So $R$ is left Kasch. This is a contradiction.
\end{proof}

 It is well known that any direct summand of a C2 (C3) module is again a C2 (C3) module. So every left countably $\Sigma$-C2 ring is left finitely $\Sigma$-C2. But the converse is not true. For example, let $R$ be a von Neumann regular ring that is not semisimple. Since $R$ is von Neumann regular, every finitely generated one-sided ideal of $R$ is a direct summand of $R$. By Lemma 2.2, it is clear that $R$ is a left and right finitely  $\Sigma$-C2 ring. But according to Theorem 2.13, if $R$ is countably $\Sigma$-C2,  $R$ is a perfect ring. This shows that $R$ is a semisimple ring. It is a contradiction.\\
  \indent The following Example 2.4 is also a left finitely $\Sigma$-C2 ring that is not left countably $\Sigma$-C2. First we recall two definitions.\\
  \indent Let $R$ be a ring and $_{R}W_{R}$  a bimodule. Recall that the \emph{trivial extension} of $W$ by $R$ is the additive group $T(R, W)=R\oplus W$ endowed with the multiplication $(a, w)(a', w')=(aa', aw'+wa')$.\\
  \indent A division ring $D$ is called \emph{existentially closed} over a field $F$ if $D$ is an $F$-algebra and every set of polynomial equations with coefficients in $D$ that is consistent (that is, has a solution in some extension division ring) has a solution in $D$ itself.
\begin{exa}\label{exa 2.4}
 {\rm Let $D$ be any countable, existentially closed division ring over a field $F$, and let $R=D\otimes_{F}$ $F(x)$. Then $S=T(R, D)$ is a left finitely $\Sigma$-C2 ring but not a left countably $\Sigma$-C2 ring. }
\end{exa}
\begin{proof}
By \cite[Example 8.16]{NY03}, $S$ is a right Johns ring which is not right artinian. Recall that a ring $S$ is called a \emph{right Johns} ring if $S$ is right noetherian and \emph{right dual} (every right ideal of $S$ is a right annihilator). Since $S$ is right dual, $S$ is right Kasch. Thus, $S$ is left finitely $\Sigma$-C2. As $S$ is right noetherian but not right artinian, $S$ is not left perfect. By Theorem 2.13, $S$ is not left countably $\Sigma$-C2.
\end{proof}

\begin{lem}\label{lem 2.5}
{\rm (\cite[Theorem 7.14]{NY03})}
Let $M_{R}$ be a module and write E={\rm End(}$M_{R}${\rm)}. Then
\begin{enumerate}
\item If E is a right C2 ring, then $M_{R}$ is a C2 module.\\
\item  The converse in {\rm(1)} holds if {\rm Ker(}$\alpha${\rm)} is generated by M whenever $\alpha\in E$ is such that {\rm\bf{r}}$_{E}${\rm(}$\alpha${\rm)} is a direct summand of $E_{E}$.
\end{enumerate}
\end{lem}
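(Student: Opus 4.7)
The plan is to prove the two implications in turn using the standard correspondence between submodules of $M$ and right ideals of $E=\mathrm{End}(M_R)$: a submodule $N\subseteq M$ gives rise to $\mathrm{Hom}(M,N)\subseteq E$, each endomorphism $\alpha\in E$ has image $\alpha(M)\subseteq M$, and its right annihilator is ${\bf r}_{E}(\alpha)=\mathrm{Hom}(M,\mathrm{Ker}(\alpha))$. I will encode the C2 condition on each side using an idempotent $e\in E$ together with an endomorphism $\alpha$ realizing the isomorphism between a submodule and a summand.

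For part (1), suppose $E$ is right C2 and let $N\leq M$ be isomorphic to a direct summand $K\leq_{\oplus}M$ via $\phi:K\to N$. Let $e\in E$ be the idempotent projecting $M$ onto $K$, and set $\alpha:=\iota\circ\phi\circ e\in E$, where $\iota:N\hookrightarrow M$ is the inclusion; then $\alpha(M)=N$, $\alpha e=\alpha$, and $\mathrm{Ker}(\alpha)=(1-e)M$. The map $eE\to\alpha E$ given by $e\beta\mapsto\alpha\beta$ is a right $E$-module isomorphism, since the equality ${\bf r}_{E}(e)=(1-e)E={\bf r}_{E}(\alpha)$ makes it both well-defined and injective. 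Thus $\alpha E\cong eE$ is isomorphic to a direct summand of $E_E$, and the right C2 hypothesis on $E$ yields $\alpha E=fE$ for some idempotent $f\in E$. From $\alpha\in fE$ and $f\in\alpha E$ one deduces $N=\alpha(M)=fM$, a direct summand of $M$.

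For part (2), suppose $M_R$ is C2 and the kernel hypothesis holds. Let $I\leq E_E$ be isomorphic via some $\theta$ to a summand $eE$ of $E_E$, and put $\alpha:=\theta(e)$, so $I=\theta(eE)=\alpha E$. Well-definedness and injectivity of $\theta$ force ${\bf r}_{E}(e)={\bf r}_{E}(\alpha)=(1-e)E$; in particular ${\bf r}_{E}(\alpha)$ is a direct summand of $E_E$. By the hypothesis, $\mathrm{Ker}(\alpha)$ is generated by $M$, i.e., $\mathrm{Ker}(\alpha)=\sum_{\gamma\in{\bf r}_{E}(\alpha)}\gamma(M)$. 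Every $\gamma\in(1-e)E$ has image inside $(1-e)M$, and $(1-e)$ itself lies in ${\bf r}_{E}(\alpha)$ with image exactly $(1-e)M$, so this sum equals $(1-e)M$. Hence $\mathrm{Ker}(\alpha)=(1-e)M$, a direct summand of $M$. Splitting $M=eM\oplus(1-e)M$, the restriction $\alpha|_{eM}:eM\to M$ is injective with image $\alpha(M)$, so $\alpha(M)\cong eM$ is isomorphic to a summand of $M$; the C2 property of $M$ then yields an idempotent $f\in E$ with $\alpha(M)=fM$. Finally $\alpha E=fE$: the inclusion $\alpha E\subseteq fE$ follows from $f\alpha=\alpha$, and the reverse from $\gamma:=(\alpha|_{eM})^{-1}\circ f\in E$, for which $\alpha\gamma=f$.

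The main obstacle lies in part (2): converting the module-theoretic hypothesis on $\mathrm{Ker}(\alpha)$ into the conclusion that $\alpha E$ is a direct summand of $E_E$. The crux is that the hypothesis forces the \emph{equality} $\mathrm{Ker}(\alpha)=(1-e)M$, not merely a containment, which in turn hinges on $(1-e)$ itself belonging to the generating set ${\bf r}_{E}(\alpha)=(1-e)E$; once $\mathrm{Ker}(\alpha)$ is recognized as a direct summand of $M$, producing the idempotent $f$ via the C2 property of $M$ is standard. Part (1) is comparatively direct since the C2 hypothesis on $E$ applies immediately to $\alpha E$.
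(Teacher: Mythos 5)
Your argument is correct, and it is essentially the standard proof of this result: the paper itself gives no proof (it simply cites \cite[Theorem 7.14]{NY03}), and your translation between submodules of $M$ and right ideals of $E$ via ${\bf r}_{E}(\alpha)=\mathrm{Hom}(M,\mathrm{Ker}(\alpha))$, with the key step in (2) being that the ``generated by $M$'' hypothesis forces $\mathrm{Ker}(\alpha)=(1-e)M$, is exactly the argument in that reference. Nothing further is needed.
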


\begin{lem}\label{lem 2.6}
Let $R$ be a ring and $\Lambda$ be an infinite set. Assume $e^{2}=e\in R$. Set M=eR and S=eRe. {\it Then} {\rm End(}$M_{R}^{(\Lambda)}${\rm)}$\cong \mathbb{C}\mathbb{F}\mathbb{M}_{\Lambda}{\rm(}S{\rm)}$.
\end{lem}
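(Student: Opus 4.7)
The plan is to exhibit an explicit ring isomorphism via the standard matrix representation of endomorphisms of an infinite direct sum. Denoting by $\iota_\mu: M \to M^{(\Lambda)}$ and $\pi_\lambda: M^{(\Lambda)} \to M$ the canonical injections and projections for each $\lambda, \mu \in \Lambda$, I would assign to any $\phi \in {\rm End}(M_R^{(\Lambda)})$ the matrix $\Phi(\phi) = (\phi_{\lambda\mu})$ defined by $\phi_{\lambda\mu} = \pi_\lambda \phi \iota_\mu \in {\rm End}(M_R)$. To view these entries as elements of $S$, I would invoke the well-known identification ${\rm End}(eR_R) \cong eRe$ given by $\psi \mapsto \psi(e)$; this is a ring isomorphism since any right $R$-linear $\psi$ satisfies $\psi(e) = \psi(e) \cdot e \in eRe$ and $(\psi_1 \psi_2)(e) = \psi_1(\psi_2(e)) = \psi_1(e) \cdot \psi_2(e)$, the last step using that $\psi_2(e) \in eRe$ equals $e \cdot \psi_2(e)$ together with right $R$-linearity of $\psi_1$.

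Next I would verify that $\Phi(\phi) \in \mathbb{C}\mathbb{F}\mathbb{M}_\Lambda(S)$: for each fixed $\mu$, the element $\phi(\iota_\mu(e)) \in M^{(\Lambda)}$ has finite support $F_\mu \subseteq \Lambda$, and since $M = eR$ is cyclic, $\pi_\lambda \phi \iota_\mu = 0$ for every $\lambda \notin F_\mu$, proving column-finiteness. Additivity of $\Phi$ is immediate. For multiplicativity, observe that for fixed $\mu$ the image of $\psi \iota_\mu$ lies in $\bigoplus_{\nu \in F'_\mu} M$ for some finite $F'_\mu \subseteq \Lambda$, hence $\psi \iota_\mu = \sum_{\nu \in F'_\mu} \iota_\nu \psi_{\nu\mu}$ as a finite sum; applying $\pi_\lambda \phi$ then yields $(\phi \psi)_{\lambda\mu} = \sum_\nu \phi_{\lambda\nu} \psi_{\nu\mu}$, the usual matrix product formula.

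For bijectivity, injectivity is immediate: if $\pi_\lambda \phi \iota_\mu = 0$ for all $\lambda, \mu$, then $\phi \iota_\mu = 0$ for every $\mu$, and hence $\phi = 0$ on the generating family $\{\iota_\mu(M)\}_\mu$ of $M^{(\Lambda)}$. For surjectivity, given a column-finite matrix $(s_{\lambda\mu})$ with entries in $S$, I would define $\phi_\mu: M \to M^{(\Lambda)}$ by $\phi_\mu = \sum_\lambda \iota_\lambda L_{s_{\lambda\mu}}$ (a finite sum by column-finiteness, where $L_s$ denotes left multiplication by $s$), and then extend via the universal property of the direct sum to obtain $\phi \in {\rm End}(M_R^{(\Lambda)})$ with $\Phi(\phi) = (s_{\lambda\mu})$. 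The main technical point to watch is the ordering of composition, so that $\Phi$ comes out as an isomorphism rather than an anti-isomorphism; this is handled correctly by the convention $\phi_{\lambda\mu} = \pi_\lambda \phi \iota_\mu$ together with the fact that the identification ${\rm End}(eR_R) \cong eRe$ above is itself an isomorphism (not an anti-isomorphism).
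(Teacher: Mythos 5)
Your proposal is correct and is essentially the paper's argument run in the opposite direction: the paper defines the map from $\mathbb{C}\mathbb{F}\mathbb{M}_{\Lambda}(S)$ to ${\rm End}(M_R^{(\Lambda)})$ by left multiplication on column vectors and checks surjectivity by extracting exactly the matrix $(\pi_\lambda\varphi\iota_\mu)$ you start from, while you define that inverse map directly and verify it is a ring isomorphism. The key points (the identification ${\rm End}(eR_R)\cong eRe$ as a genuine, not anti-, isomorphism, column-finiteness from finiteness of supports, and the matrix product formula) all check out.
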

\begin{proof}
We prove the case $\Lambda=\mathbb{N}$. The others are similar. To be convenient, we consider $M_{R}^{(\mathbb{N})}$ as the set of all $\mathbb{N}\times 1$ column matrices with finite nonzero entries in $M$. Then for any $\alpha\in M_{R}^{(\mathbb{N})}$ and $A\in \mathbb{C}\mathbb{F}\mathbb{M}_{\mathbb{N}}{\rm(}S{\rm)}$, $A\alpha\in M_{R}^{(\mathbb{N})}$. Now define a map $F$ from $\mathbb{C}\mathbb{F}\mathbb{M}_{\mathbb{N}}{\rm(}S{\rm)}$ to  End($M_{R}^{(\mathbb{N})}$) such that for every $A\in \mathbb{C}\mathbb{F}\mathbb{M}_{\mathbb{N}}{\rm(}S{\rm)}$ and any $\alpha\in M_{R}^{(\mathbb{N})}$, $F(A)(\alpha)=A\alpha$. It is clear that $F$ is a ring homomorphism from $\mathbb{C}\mathbb{F}\mathbb{M}_{\mathbb{\mathbb{N}}}{\rm(}S{\rm)}$ to End($M_{R}^{(\mathbb{N})}$). Next we show that $F$ is an isomorphism. It is easy to see that $F$ is a monomorphism. We only need to show that $F$ is epic. Let $\varepsilon_{i}$ be the element in $M_{R}^{(\mathbb{N})}$  with the $i$th entry equal to $e$ and the others are zero, $\forall i\in \mathbb{N}$. Assume $\varphi\in$ End($M_{R}^{(\mathbb{N})}$). Let $B$=[$\varphi(\varepsilon_{1}),\varphi(\varepsilon_{2}),\ldots,\varphi(\varepsilon_{n})\ldots$]  and $E$=[$\varepsilon_{1}, \varepsilon_{2}, \ldots, \varepsilon_{n}\ldots$] be the matrices with the $i$th column equal to $\varphi(\varepsilon_{i})$ and  $\varepsilon_{i}$, respectively, $i\in\mathbb{N}$. It is clear that $E^{2}=E$ and  $BE\in\mathbb{C}\mathbb{F}\mathbb{M}_{\mathbb{N}}{\rm(}S{\rm)}$. For each $X\in M_{R}^{(\mathbb{N})}$, there exists finite nonzero elements $r_{i}\in eR, i\in\mathbb{N}$, such that $X=\sum_{i=1}^{\infty}\varepsilon_{i}r_{i}$. Let $C$ be the  $\mathbb{N}\times 1$ column matrix with the $i$th entry equal to $r_{i}, i\in\mathbb{N}$. Then $X=EC$. Thus $\varphi(X)=\varphi(\sum_{i=1}^{\infty}\varepsilon_{i}r_{i})=\sum_{i=1}^{\infty}\varphi(\varepsilon_{i})r_{i}=\sum_{i=1}^{\infty}
\varphi(\varepsilon_{i})er_{i}=BEC=BEEC=BEX$. Set $A=BE$. It is clear that $\varphi=F(A)$. Therefore, $F$ is an epimorphism.
\end{proof}
The following Theorem 2.7 shows that $R$ is a right countably $\Sigma$-C2 ring if and only if $\mathbb{C}\mathbb{F}\mathbb{M}_{\mathbb{N}}(R)$ is a right C2 ring. At the end of the article, we will show that $R$ is a right countably $\Sigma$-C2 ring if and only if $\mathbb{C}\mathbb{F}\mathbb{M}_{\Lambda}(R)$ is a right C2 ring for any infinite set $\Lambda$.
\begin{thm}\label{thm 2.7}
Let R be a ring and $\Lambda$ be an infinite set. Then $R_{R}^{(\Lambda)}$ is a C2 module if and only if $\mathbb{C}\mathbb{F}\mathbb{M}_{\Lambda}{\rm(}R{\rm)}$ is a right C2 ring. In particular,  $R$ is a right countably $\Sigma$-C2 ring if and only if $\mathbb{C}\mathbb{F}\mathbb{M}_{\mathbb{N}}(R)$ is a right C2 ring.
\end{thm}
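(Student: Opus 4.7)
The plan is to derive the theorem as an immediate consequence of Lemmas~2.5 and 2.6. Applying Lemma~2.6 with the idempotent $e=1\in R$ yields $M = 1\cdot R = R_R$ and $S = 1\cdot R\cdot 1 = R$, and hence a ring isomorphism
\[
E \;:=\; \mathrm{End}(R_R^{(\Lambda)}) \;\cong\; \mathbb{C}\mathbb{F}\mathbb{M}_{\Lambda}(R).
\]
Under this identification the theorem becomes the statement that $R_R^{(\Lambda)}$ is a C2 module if and only if its endomorphism ring $E$ is a right C2 ring, so the strategy reduces to invoking both parts of Lemma~2.5 with $M_R = R_R^{(\Lambda)}$.

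The ``if'' direction is supplied directly by Lemma~2.5(1), which carries no side hypothesis.

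For the ``only if'' direction I would apply Lemma~2.5(2). The extra condition to be checked is that $\ker(\alpha)$ is generated by $M$ whenever $\alpha\in E$ satisfies the property that $\mathbf{r}_{E}(\alpha)$ is a direct summand of $E_E$. I plan to verify this hypothesis unconditionally, for every $\alpha\in E$. Since $M = R^{(\Lambda)}$ is a free right $R$-module, $R_R$ is a direct summand of $M$, so for each $x\in\ker(\alpha)$ the map
\[
f_x\colon R^{(\Lambda)}\longrightarrow R^{(\Lambda)}, \qquad (y_i)_i \longmapsto x\,y_{j_0}
\]
(projection onto some fixed coordinate $j_0$ followed by the right multiplication $r\mapsto xr$) is an $R$-homomorphism from $M$ to $M$ whose image $xR$ lies in $\ker(\alpha)$. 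Because $\ker(\alpha)$ equals the sum of its cyclic submodules, it is the sum of the images of the $f_x$'s, i.e.\ generated by $M$. The ``in particular'' clause then follows by taking $\Lambda=\mathbb{N}$.

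In summary, the argument is essentially a two-step packaging of Lemmas~2.5 and 2.6; I do not foresee a real obstacle. The only subtle point is parsing the side condition in Lemma~2.5(2) and observing that it holds automatically for any module that contains $R_R$ as a direct summand, in particular for any nonzero free module.
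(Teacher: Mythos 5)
Your proposal is correct and follows essentially the same route as the paper: identify $\mathrm{End}(R_R^{(\Lambda)})$ with $\mathbb{C}\mathbb{F}\mathbb{M}_{\Lambda}(R)$ via Lemma~2.6 (with $e=1$) and then apply both directions of Lemma~2.5. The paper disposes of the side condition in Lemma~2.5(2) simply by noting that $R_R^{(\Lambda)}$ is a generator; your explicit construction of the maps $f_x$ is just the standard verification of that same fact, so the two arguments coincide.
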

\begin{proof}
By the above lemma,  End($R^{(\Lambda)}_{R}$)$\cong\mathbb{C}\mathbb{F}\mathbb{M}_{\Lambda}(R)$. Since $R^{(\Lambda)}_{R}$ is a generator of right $R$-modules, by Lemma 2.5, $R_{R}^{(\Lambda)}$ is a C2 module if and only if $\mathbb{C}\mathbb{F}\mathbb{M}_{\Lambda}(R)$ is a right C2 ring.
\end{proof}

The following two propositions show that being right countably $\Sigma$-C2 is a Morita invariant.

\begin{prop}\label{prop 2.8}
If a ring $R$ is a right countably $\Sigma$-C2 ring, then $\mathbb{M}_{n}(R)$ is a right countably $\Sigma$-C2 ring for each $n\geq 1$.
\end{prop}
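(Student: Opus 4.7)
The plan is to reduce the statement to Theorem 2.7 and exhibit a ring isomorphism between the two relevant column finite matrix rings.

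By Theorem 2.7, $\mathbb{M}_n(R)$ is a right countably $\Sigma$-C2 ring if and only if $\mathbb{C}\mathbb{F}\mathbb{M}_{\mathbb{N}}(\mathbb{M}_n(R))$ is a right C2 ring. The hypothesis, together with the same theorem applied to $R$, says that $\mathbb{C}\mathbb{F}\mathbb{M}_{\mathbb{N}}(R)$ is a right C2 ring. Therefore, the main thing I need to establish is a ring isomorphism
\[
\mathbb{C}\mathbb{F}\mathbb{M}_{\mathbb{N}}(\mathbb{M}_n(R))\;\cong\;\mathbb{C}\mathbb{F}\mathbb{M}_{\mathbb{N}}(R).
\]

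The idea behind this isomorphism is the standard ``blow-up'' of block matrices. I would fix a bijection $\mathbb{N}\cong\mathbb{N}\times\{1,\dots,n\}$ and send an $\mathbb{N}\times\mathbb{N}$ matrix $A=(A_{ij})$ with entries $A_{ij}\in\mathbb{M}_n(R)$ to the $(\mathbb{N}\times\{1,\dots,n\})\times(\mathbb{N}\times\{1,\dots,n\})$ matrix over $R$ obtained by replacing each block entry $A_{ij}$ by its $n\times n$ array of scalar entries. This map is clearly additive, multiplicative, and sends the identity to the identity, once the bijection on indices is fixed. The main thing to check is that it restricts correctly to the column finite subrings in both directions: if each column of $A$ has only finitely many nonzero block entries $A_{ij}$, then the corresponding $n$ scalar columns each have at most $n$ times that many nonzero entries, hence are finite; conversely, a column finite scalar matrix, when grouped into $n\times n$ blocks, has each block column supported on finitely many block rows since each of its $n$ scalar columns is. These verifications are routine and are the only real computation in the proof.

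Granted the isomorphism, the proposition follows immediately: $\mathbb{C}\mathbb{F}\mathbb{M}_{\mathbb{N}}(R)$ is right C2 by the hypothesis and Theorem 2.7, hence $\mathbb{C}\mathbb{F}\mathbb{M}_{\mathbb{N}}(\mathbb{M}_n(R))$ is right C2, and applying Theorem 2.7 again to the ring $\mathbb{M}_n(R)$ gives the conclusion. I expect no serious obstacle, since countability of $\mathbb{N}$ makes $\mathbb{N}\cdot n$ again countable and column finiteness is preserved by the block/unblock operations; the only care is in setting up the index bijection explicitly and checking the two column-finiteness conditions translate to each other.
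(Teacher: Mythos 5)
Your proposal is correct and follows essentially the same route as the paper: the authors also reduce via Theorem 2.7 to the column finite matrix rings and then simply identify $\mathbb{C}\mathbb{F}\mathbb{M}_{\mathbb{N}}(\mathbb{M}_n(R))$ with $\mathbb{C}\mathbb{F}\mathbb{M}_{\mathbb{N}}(R)$ (they assert this as an equality, while you spell out the block ``blow-up'' isomorphism and the preservation of column finiteness). Your extra verification is harmless and fills in exactly the detail the paper leaves implicit.
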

\begin{proof}
For each $n\geq 1$, set $S=\mathbb{M}_{n}(R)$.
Since $R$ is a right countably $\Sigma$-C2 ring, by Theorem 2.7, $\mathbb{C}\mathbb{F}\mathbb{M}_{\mathbb{N}}(R)$ is a right C2 ring. As $\mathbb{C}\mathbb{F}\mathbb{M}_{\mathbb{N}}(S)=\mathbb{C}\mathbb{F}\mathbb{M}_{\mathbb{N}}(R)$, again by Theorem 2.7,  $S$ is a right countably $\Sigma$-C2 ring.
\end{proof}

\begin{prop}\label{prop 2.9}
If $R$ is a right countably $\Sigma$-C2 ring, then eRe is a right countably $\Sigma$-C2 ring, where $e^{2}=e\in R$ and $ReR=R$.
\end{prop}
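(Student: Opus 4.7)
The plan is to combine Theorem 2.7, Lemma 2.6, and Lemma 2.5 to translate the hypothesis on $R$ into the desired statement on $S:=eRe$. Setting $M:=eR$, Theorem 2.7 reduces the goal to showing that $\mathbb{C}\mathbb{F}\mathbb{M}_{\mathbb{N}}(S)$ is a right C2 ring, and Lemma 2.6 identifies this matrix ring with $\mathrm{End}(M_R^{(\mathbb{N})})$. So the real task is to prove that $E:=\mathrm{End}(M_R^{(\mathbb{N})})$ is a right C2 ring.

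I would obtain this by applying Lemma 2.5(2) to the module $M_R^{(\mathbb{N})}$. Two conditions need checking. First, that $M_R^{(\mathbb{N})}$ is itself a C2 module: since $e^{2}=e$, $eR$ is a direct summand of $R_R$, so $M_R^{(\mathbb{N})}=(eR)_R^{(\mathbb{N})}$ is a direct summand of $R_R^{(\mathbb{N})}$, and the latter is C2 by hypothesis; any direct summand of a C2 module is again C2. Second, that for every $\alpha\in E$ with $\mathbf{r}_E(\alpha)$ a direct summand of $E_E$, the submodule $\mathrm{Ker}(\alpha)$ of $M_R^{(\mathbb{N})}$ is generated by $M_R^{(\mathbb{N})}$. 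Here the assumption $ReR=R$ enters decisively: writing $1=\sum_{i=1}^{n}r_{i}e s_{i}$ with $r_{i},s_{i}\in R$ shows that $eR$ generates $R_R$, hence every right $R$-module is generated by $M=eR$, and a fortiori by $M^{(\mathbb{N})}$. Thus \emph{every} submodule of $M_R^{(\mathbb{N})}$ is generated by $M_R^{(\mathbb{N})}$, and in particular this holds for any such kernel.

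With both hypotheses of Lemma 2.5(2) satisfied, $E$ is a right C2 ring, whence $\mathbb{C}\mathbb{F}\mathbb{M}_{\mathbb{N}}(S)\cong E$ is right C2 by Lemma 2.6, and therefore $S=eRe$ is right countably $\Sigma$-C2 by Theorem 2.7. The argument is essentially a three-step assembly of earlier results, so I do not anticipate any serious obstacle; the only conceptual point worth flagging is recognizing that the assumption $ReR=R$ is precisely what endows $(eR)^{(\mathbb{N})}$ with the generator property that trivializes the side condition in Lemma 2.5(2).
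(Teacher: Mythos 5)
Your proposal is correct and follows essentially the same route as the paper: reduce via Theorem 2.7 and Lemma 2.6 to showing $\mathrm{End}((eR)_R^{(\mathbb{N})})$ is right C2, note that $(eR)^{(\mathbb{N})}$ is C2 as a direct summand of $R_R^{(\mathbb{N})}$, and use $ReR=R$ to make $(eR)^{(\mathbb{N})}$ a generator so that the side condition of Lemma 2.5(2) is automatic. The paper's proof is the same three-step assembly, merely stated more tersely.
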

\begin{proof}
Set $M=eR$ and $S=eRe$. By Lemma 2.6, End($M_{R}^{(\mathbb{N})}$)$\cong \mathbb{C}\mathbb{F}\mathbb{M}_{\mathbb{N}}(S)$. Since $ReR=R$, it is easy to see that $M$ is a generator of right $R$-modules. This implies that $M_{R}^{(\mathbb{N})}$ is also a generator of right $R$-modules. As $R^{(\mathbb{N})}_{R}$ is a C2 module and $M^{(\mathbb{N})}_{R}$ is a direct summand of $R^{(\mathbb{N})}_{R}$, $M^{(\mathbb{N})}_{R}$ is also a C2 module. According to Lemma 2.5, $\mathbb{C}\mathbb{F}\mathbb{M}_{\mathbb{N}}(S)$ is a right C2 ring. Thus, by Theorem 2.7, $S$ is a right countably $\Sigma$-C2 ring.
\end{proof}

The following Theorem 2.13 shows that right countably $\Sigma$-C2  rings are just the rings satisfying r\emph{FPD}($R$)=0, which were characterized by Huyman Bass in \cite{B60}. First we need some lemmas.

\begin{lem}\label{lem 2.10}
{\rm (\cite[Lemma 19.18]{AF92})}
Let R be a ring and  V be a flat right $R$-module and suppose that the sequence $$0\rightarrow K\rightarrow V\rightarrow V^{'}\rightarrow 0$$ is exact. Then $V^{'}$ is flat if and only if for each {\rm(}finitely generated{\rm)} left ideal $I\subseteq$$ _{R}R$, $KI=K\cap VI$.
\end{lem}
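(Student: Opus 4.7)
The plan is to use the standard characterization of flatness in terms of $\mathrm{Tor}$: $V'$ is flat if and only if $\mathrm{Tor}_1^R(V', R/I) = 0$ for every (finitely generated) left ideal $I \subseteq {}_R R$. With this in hand, the short exact sequence $0 \to K \to V \to V' \to 0$ can be analyzed through its long exact $\mathrm{Tor}$ sequence, and the condition $KI = K \cap VI$ will emerge as an explicit description of the relevant $\mathrm{Tor}_1$ term.

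First I would tensor the short exact sequence by $R/I$ on the right to obtain the fragment
\[
\mathrm{Tor}_1^R(V, R/I) \to \mathrm{Tor}_1^R(V', R/I) \to K \otimes_R R/I \to V \otimes_R R/I.
\]
Since $V$ is flat, $\mathrm{Tor}_1^R(V, R/I) = 0$, so $\mathrm{Tor}_1^R(V', R/I)$ is identified with the kernel of the natural map $K \otimes_R R/I \to V \otimes_R R/I$ induced by the inclusion $K \hookrightarrow V$.

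Next I would invoke the general identification $M \otimes_R R/I \cong M/MI$ for any right $R$-module $M$, which follows from applying $M \otimes_R -$ to $0 \to I \to R \to R/I \to 0$. Under this identification the map above becomes $K/KI \to V/VI$, whose kernel is visibly $(K \cap VI)/KI$. Consequently $\mathrm{Tor}_1^R(V', R/I) \cong (K \cap VI)/KI$, so its vanishing is equivalent to the equality $KI = K \cap VI$.

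The only point requiring care is the reduction to finitely generated left ideals, which is what the parenthetical in the statement allows: because $\mathrm{Tor}$ commutes with direct limits and every left ideal is a directed union of its finitely generated subideals, checking the vanishing for finitely generated $I$ already forces $V'$ to be flat, which in turn yields the stated equality for all left ideals. Beyond this small reduction, the argument is essentially a diagram chase together with the identification of $- \otimes_R R/I$ with reduction modulo $I$, so I do not anticipate any serious obstacle.
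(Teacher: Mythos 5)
Your argument is correct and complete. Note, though, that the paper does not prove this lemma at all: it is imported verbatim as \cite[Lemma 19.18]{AF92}, so there is no in-paper proof to match against. Your route is the standard derived-functor one: from the long exact sequence for $-\otimes_R R/I$ and the flatness of $V$ you get $\operatorname{Tor}_1^R(V',R/I)\cong\ker\bigl(K/KI\to V/VI\bigr)=(K\cap VI)/KI$, and the ideal criterion for flatness (vanishing of $\operatorname{Tor}_1$ against $R/I$ for finitely generated left ideals $I$) finishes both directions; your handling of the reduction to finitely generated ideals via direct limits is also fine. The proof in Anderson--Fuller is the more elementary version of the same computation: it works directly with the natural maps $K\otimes_R I\to K$, $V\otimes_R I\to V$ and a diagram chase, never invoking $\operatorname{Tor}$, which keeps the argument self-contained at the level of tensor products but obscures the fact that the obstruction $(K\cap VI)/KI$ \emph{is} $\operatorname{Tor}_1^R(V',R/I)$. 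Your version is shorter and makes the equivalence conceptually transparent; either is acceptable here, since the lemma is only used in the paper as a black box in the proof of Theorem 2.13.
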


The next result is first obtained by Yiqiang Zhou. To be self-contained, we show the proof.

\begin{lem}\label{lem 2.11}{\rm(Yiqiang Zhou)}
Let R be a ring and  M be a right $R$-module. If the direct sum $M\oplus M$ is a C3 module, then $M$ is a C2 module.
\end{lem}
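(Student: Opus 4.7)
The plan is to use the C3 property of $M\oplus M$ to exhibit $N\oplus M$ as a direct summand of $M\oplus M$, and then to extract a complement of $N$ in $M$ by projecting the complementary summand onto the first coordinate. Suppose $N\subseteq M$ with $N\cong K$ where $K\leq_{\oplus}M$; write $M=K\oplus K'$, let $e\in\text{End}(M)$ be the idempotent with $eM=K$ and $(1-e)M=K'$, and let $\varphi:K\to N$ denote the given isomorphism. The goal is to show $N\leq_{\oplus}M$.

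First, I would produce two direct summands of $M\oplus M$ with trivial intersection. Take $A=\{(\varphi(k),k):k\in K\}$ and $B=0\oplus M$. The summand $B$ is obvious, while $A$ is a summand because it is the image of the idempotent endomorphism of $M\oplus M$ sending $(m_{1},m_{2})$ to $(\varphi(e(m_{2})),e(m_{2}))$; equivalently, the shear $(m_{1},m_{2})\mapsto(m_{1}+\varphi(e(m_{2})),m_{2})$ is an automorphism of $M\oplus M$ carrying the summand $0\oplus K$ onto $A$. A quick check shows $A\cap B=0$ (since $(\varphi(k),k)=(0,m)$ forces $k=0$) and $A+B=N\oplus M$. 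By the C3 hypothesis, $N\oplus M\leq_{\oplus}M\oplus M$.

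Finally, write $M\oplus M=(N\oplus M)\oplus W$ and let $p_{1}:M\oplus M\to M$ denote the first-coordinate projection; I claim $M=N\oplus p_{1}(W)$. If $n\in N\cap p_{1}(W)$ with $n=p_{1}(w)$ and $w=(n,w_{2})\in W$, then $w\in(N\oplus M)\cap W=0$, so $n=0$. For any $m\in M$, decomposing $(m,0)=(n,m_{2})+(w_{1},w_{2})$ with $(n,m_{2})\in N\oplus M$ and $(w_{1},w_{2})\in W$ yields $m=n+w_{1}\in N+p_{1}(W)$. Hence $N\leq_{\oplus}M$ and $M$ is C2. The main obstacle is designing $A$ so that $A+B=N\oplus M$ while $A$ is genuinely a direct summand of $M\oplus M$; once this is in place, the application of C3 and the extraction of the complement via projection are short direct-sum bookkeeping steps.
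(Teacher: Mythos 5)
Your proof is correct and follows essentially the same route as the paper's: both form the graph of the isomorphism between the submodule and the given summand, check it is a direct summand meeting a second summand trivially, and apply C3. The only cosmetic differences are that you pair the graph with $0\oplus M$ (getting $N\oplus M$) where the paper pairs it with $0\oplus L$ (getting the analogue of $K\oplus L$), and you extract the complement by an explicit projection rather than the paper's summand-of-a-summand observation.
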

\begin{proof}
Assume $K$ is a submodule of $M$ that is isomorphic to a direct summand $L$ of $M$.  We want to show that $K$ is also a direct summand of $M$. Let $f$ be the isomorphism from $K$ to $L$. Set $K'=\{(x, f(x)): x\in K\}$,  $L'=0\oplus L$ and $M'=M\oplus 0$. Then $K'+M'=M\oplus L$ is a direct summand of $M\oplus M$. Since $K'\cap M'=0$, $K'$ is also a direct summand of $M\oplus M$. It is clear that  $K'\cap L'=0$ and $L'$ is a direct summand of $M\oplus M$. Since $M\oplus M$ is a C3 module. $K'+L'=K\oplus L$ is a direct summand of $M\oplus M$. As $K\oplus 0$ is a direct summand of $K\oplus L$, $K\oplus 0$ is also a direct summand of $M\oplus M$. This shows that $K\oplus 0$ is a direct summand of $M\oplus 0$. It is clear that $K$ is a direct summand of $M$.
\end{proof}

\begin{lem}\label{lem 2.12}
Let R be a ring and $\Lambda$ be an infinite set. Then $R_{R}^{(\Lambda)}$ is a C3 module if and only if $\mathbb{C}\mathbb{F}\mathbb{M}_{\Lambda}{\rm(}R{\rm)}$ is a right C3 ring.
\end{lem}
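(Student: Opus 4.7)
The plan is to reduce the C3 statement to the C2 statement already established in Theorem 2.7, by observing that both $R_R^{(\Lambda)}$ and $\mathbb{C}\mathbb{F}\mathbb{M}_{\Lambda}(R)$, viewed as a right module over itself, are isomorphic to their own direct doubles. Lemma 2.11 will then force the C2 and C3 conditions to coincide on each side.

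Set $S=\mathbb{C}\mathbb{F}\mathbb{M}_{\Lambda}(R)$. Since $\Lambda$ is infinite, there is a bijection $\Lambda\sqcup\Lambda\to\Lambda$, which immediately yields $R_R^{(\Lambda)}\oplus R_R^{(\Lambda)}\cong R_R^{(\Lambda)}$. For the matrix ring side, I would partition $\Lambda=\Lambda_1\sqcup\Lambda_2$ with $|\Lambda_1|=|\Lambda_2|=|\Lambda|$ and let $e_i\in S$ be the diagonal idempotent supported on $\Lambda_i$, so that $1=e_1+e_2$ and $S_S=e_1S\oplus e_2S$. Now $e_iS$ consists of the column-finite matrices whose rows outside $\Lambda_i$ vanish, i.e.\ column-finite matrices with rows indexed by $\Lambda_i$ and columns by $\Lambda$. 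A bijection $\Lambda\to\Lambda_i$ induces a relabeling of rows; because the right $S$-action is column multiplication, this relabeling is right $S$-linear, so $e_iS\cong S$ as right $S$-modules. Hence $S_S\oplus S_S\cong S_S$.

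With these two isomorphisms in hand, the argument finishes quickly. If $R_R^{(\Lambda)}$ is C3, then $R_R^{(\Lambda)}\oplus R_R^{(\Lambda)}\cong R_R^{(\Lambda)}$ is C3, so by Lemma 2.11 the module $R_R^{(\Lambda)}$ is C2; the converse implication C2 $\Rightarrow$ C3 is standard. The same reasoning, applied to $S_S$, shows that $S$ is right C3 if and only if $S$ is right C2. Chaining these with Theorem 2.7 yields the desired equivalence
\[
R_R^{(\Lambda)}\text{ is C3}\iff R_R^{(\Lambda)}\text{ is C2}\iff S\text{ is right C2}\iff S\text{ is right C3}.
\]
The only substantive point is the verification $e_iS\cong S_S$; everything else is a direct application of Theorem 2.7 and Lemma 2.11, with no additional ingredient required.
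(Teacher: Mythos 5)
Your proof is correct, but it takes a genuinely different route from the paper. The paper argues directly on the matrix ring: it asserts that every right ideal of $\mathbb{C}\mathbb{F}\mathbb{M}_{\Lambda}(R)$ is of the form $\bar T=\{[\alpha~\beta~\cdots]\mid \alpha,\beta,\ldots\in T\}$ for a submodule $T$ of $R_{R}^{(\Lambda)}$, checks that $T\mapsto\bar T$ respects direct summands, sums and intersections, and transfers the C3 condition through this correspondence. You instead exploit the self-doubling isomorphisms $R_{R}^{(\Lambda)}\oplus R_{R}^{(\Lambda)}\cong R_{R}^{(\Lambda)}$ and $S_{S}\oplus S_{S}\cong S_{S}$ (the latter via the row-relabeling isomorphism $e_{i}S\cong S_{S}$, which is indeed right $S$-linear because the right action only mixes columns) to conclude from Lemma 2.11 that C2 and C3 coincide on each side, and then quote Theorem 2.7. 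Your reduction buys economy and robustness: it reuses results already established, and it sidesteps the classification of right ideals of $\mathbb{C}\mathbb{F}\mathbb{M}_{\Lambda}(R)$ --- a classification which, as stated in the paper for \emph{all} right ideals, is actually too strong (e.g.\ the ideal of matrices with only finitely many nonzero rows is not of the form $\bar T$), though it does hold for the direct summands that the C3 condition actually requires. The paper's correspondence, for its part, is self-contained and makes the bijection between summands of $S_{S}$ and summands of $R_{R}^{(\Lambda)}$ explicit, which is of independent interest. One could streamline your version further by obtaining $e_{i}S\cong S_{S}$ from Lemma 2.6, since both sides are $\mathrm{End}$- or $\mathrm{Hom}$-sets of $R^{(\Lambda)}\cong R^{(\Lambda_{i})}$, but your direct verification is perfectly adequate.
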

\begin{proof}
Consider $R_{R}^{(\Lambda)}$ as the set of all $card(\Lambda)\times 1$ column matrices with finite nonzero entries in $R$. It is easy to see that every right ideal $\bar T$ of $\mathbb{C}\mathbb{F}\mathbb{M}_{\Lambda}{\rm(}R{\rm)}$ has the form
$\bar T$=$\{[\alpha~\beta~\gamma~\cdots]|\alpha, \beta, \gamma, \ldots \in T\}$, where $T$ is a submodule of $R_{R}^{(\Lambda)}$. It can be verified  that $\bar T$ is a direct summand of  $\mathbb{C}\mathbb{F}\mathbb{M}_{\Lambda}{\rm(}R{\rm)}$ if and only if $T$ is a direct summand  of $R_{R}^{(\Lambda)}$. And for any two right ideals $\bar T_{1}$ and $\bar T_{2}$ of $\mathbb{C}\mathbb{F}\mathbb{M}_{\Lambda}{\rm(}R{\rm)}$, $\bar T_{1}+\bar T_{2}=\{[\alpha~\beta~\gamma~\cdots]|\alpha, \beta, \gamma, \ldots \in T_{1}+T_{2}\}$ and $\bar T_{1}\cap\bar T_{2}=\{[\alpha~\beta~\gamma~\cdots]|\alpha, \beta, \gamma, \ldots \in T_{1}\cap T_{2}\}$. By the definition of C3 condition, it is easy to see that $R_{R}^{(\Lambda)}$ is a C3 module if and only if $\mathbb{C}\mathbb{F}\mathbb{M}_{\Lambda}{\rm(}R{\rm)}$ is a right C3 ring.

\end{proof}
We recall two dimensions of $R$ ($Pd_{R}(M)$ denotes the projective dimension of a module $M$):\\
r\emph{FPD(R)}=sup\{$Pd_{R}(M)|$ $M$ is a right $R$-module with $Pd_{R}(M)<\infty$\}.\\
r\emph{fPD(R)}=sup\{$Pd_{R}(M)|$ $M$ is a finitely generated right $R$-module with $Pd_{R}(M)<\infty$\}.

\begin{thm}\label{thm 2.13}
 The following are equivalent for a ring  $R$.
 \begin{enumerate}
\item  $R$ is a right countably $\Sigma$-C2 ring.
\item $R^{(\mathbb{N})}_{R}$ is a C3 module.
\item Every free right $R$-module is a C2 module.
\item Every free right $R$-module is a C3 module.
\item Every projective right $R$-module is a C2 module.
\item Every projective right $R$-module is a C3 module.
\item $\mathbb{C}\mathbb{F}\mathbb{M}_{\mathbb{N}}{\rm(}R{\rm)}$ is a right C2 ring.
\item $\mathbb{C}\mathbb{F}\mathbb{M}_{\mathbb{N}}{\rm(}R{\rm)}$ is a right C3 ring.
\item For any infinite set $\Lambda$, $\mathbb{C}\mathbb{F}\mathbb{M}_{\Lambda}{\rm(}R{\rm)}$ is a right C2 ring.
\item For any infinite set $\Lambda$, $\mathbb{C}\mathbb{F}\mathbb{M}_{\Lambda}{\rm(}R{\rm)}$ is a right C3 ring.
\item $R$ is a right perfect ring and right finitely $\Sigma$-C2.
\item $R$ is a right perfect ring and every finite direct sum copies of $R_{R}$ is a C3 module.
\item $R$ is a right perfect ring and rfPD($R$)=0.
\item $R$ is a right perfect ring and every simple right $R$-module is a homomorphic image of an injective module.
\item rFPD($R$)=0.

\end{enumerate}
\end{thm}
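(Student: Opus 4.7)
The strategy is to use the earlier results to collapse the fifteen conditions into a short chain and then appeal to Bass's characterization of right perfect rings. First, Theorem~2.7 identifies each statement ``$R^{(\Lambda)}_R$ is C2'' with ``$\mathbb{C}\mathbb{F}\mathbb{M}_{\Lambda}(R)$ is right C2'', and Lemma~2.12 does the same for C3. To merge the C2 and C3 statements at the infinite level, observe that for every infinite set $\Lambda$ we have $R^{(\Lambda)} \oplus R^{(\Lambda)} \cong R^{(\Lambda)}$, so if $R^{(\Lambda)}$ is C3 then $R^{(\Lambda)} \oplus R^{(\Lambda)}$ is C3, and hence $R^{(\Lambda)}$ is C2 by Lemma~2.11 (the reverse direction is immediate). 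This folds $(1)\Leftrightarrow(2)\Leftrightarrow(7)\Leftrightarrow(8)$ and $(9)\Leftrightarrow(10)$. For the projective/free pairs, C2 and C3 both pass to direct summands and every projective is a direct summand of some free, so $(3)\Leftrightarrow(5)$ and $(4)\Leftrightarrow(6)$; applying the same Lemma~2.11 trick to $M$ free (resp.\ projective) yields $(3)\Leftrightarrow(4)$ and $(5)\Leftrightarrow(6)$.

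The central implication is $(1)\Rightarrow(11)$: $R$ is right perfect and right finitely $\Sigma$-C2. The second conjunct is immediate since $R^n_R$ is a direct summand of the C2 module $R^{(\mathbb{N})}_R$. For the perfectness, I would run the classical Bass-type construction: given any sequence $a_1, a_2, \ldots \in R$, let $F = R^{(\mathbb{N})}$ with standard basis $\{e_i\}_{i\in\mathbb{N}}$ and set $N = \sum_{i\geq 1} x_i R$ where $x_i = e_i - e_{i+1} a_i$. A routine coefficient check shows the $x_i$ form an $R$-basis of $N$, so $N \cong R^{(\mathbb{N})} = F$ is isomorphic to the direct summand $F$ of itself; the C2 hypothesis then forces $N$ to be a direct summand of $F$, and consequently $F/N$ is projective. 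Identifying $F/N$ with the direct limit of $R \to R \to R \to \cdots$ whose $n$th transition is left multiplication by $a_n$, the projectivity of this generic flat module for every choice of $(a_n)$ is the module-theoretic version of Bass's descending chain condition on principal left ideals, and delivers the left $T$-nilpotence of $J(R)$; together with the semisimplicity of $R/J(R)$ this gives right perfectness.

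Conversely, $(11)\Rightarrow(5)$ is the standard structure argument: over a right perfect ring, every projective right $R$-module decomposes as a direct sum of local indecomposable projectives, and the right finitely $\Sigma$-C2 hypothesis forces each such decomposition to inherit C2. The equivalences linking $(13)$, $(14)$, $(15)$ are Bass's own from \cite{B60}: $rFPD(R)=0$ is characterized there as $R$ being right perfect together with every simple right $R$-module being a homomorphic image of an injective, which by Lemma~2.2 is exactly the right finitely $\Sigma$-C2 condition. The main obstacle will be the implication $(1)\Rightarrow(11)$: the construction of $N$ is clean, but extracting both the $T$-nilpotence of $J(R)$ and the semisimplicity of $R/J(R)$ from the mere projectivity of the flat direct limits $F/N$ requires careful invocation of Bass's classical perfect-ring theorems rather than a direct module-theoretic manipulation.
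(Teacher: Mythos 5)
Most of your reductions match the paper's: the folding of the C2/C3 pairs via Lemma~2.11 applied to $M\oplus M\cong M$ for $M=R^{(\Lambda)}$, the passage between free and projective modules by taking direct summands, the translation to $\mathbb{C}\mathbb{F}\mathbb{M}_{\Lambda}(R)$ via Theorem~2.7 and Lemma~2.12, the citation of Bass for $(13)$--$(15)$, and the Bass construction $y_i=x_i-x_{i+1}a_i$ for $(1)\Rightarrow(11)$ (the paper concludes more directly: $G\leq_{\oplus}F$ forces the chain $Ra_1\supseteq Ra_2a_1\supseteq\cdots$ to terminate by \cite[Lemma 28.2]{AF92}, giving DCC on principal left ideals and hence right perfectness by \cite[Theorem 28.4]{AF92}, so you need not separately extract $T$-nilpotence and semisimplicity of $R/J$).

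The genuine gap is the converse direction $(11)\Rightarrow(5)$ (equivalently $(11)\Rightarrow(3)$), which you dispose of in one sentence: ``the right finitely $\Sigma$-C2 hypothesis forces each such decomposition to inherit C2.'' That assertion is the entire content of the hard half of the theorem, and as a general principle it is false: the C2 property does not pass to direct sums, and finitely $\Sigma$-C2 alone does not imply countably $\Sigma$-C2 (a von Neumann regular non-semisimple ring is finitely $\Sigma$-C2 on both sides but not countably $\Sigma$-C2; see also Example~2.4). So any correct argument must use right perfectness for more than the decomposition of projectives into local summands. The paper's actual argument runs as follows: given $K\leq F=R^{(A)}$ isomorphic to a direct summand of $F$, it suffices to show $F/K$ is projective, and since $R$ is right perfect this reduces to showing $F/K$ is flat. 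Using the structure of projectives over the semiperfect ring $R$ one writes $K$ as a directed union of finitely generated submodules $K_\alpha$, each isomorphic to a finitely generated projective and hence, by the finitely $\Sigma$-C2 hypothesis, a direct summand of $F$; Lemma~2.10 then gives $K_\alpha I=K_\alpha\cap FI$ for every left ideal $I$, and passing to the union yields $KI=K\cap FI$, so $F/K$ is flat, hence projective, hence $K\leq_{\oplus}F$. Your sketch contains none of this mechanism, so the implication from $(11)$ back to the unconditional statements is not established.
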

\begin{proof}It is clear that (1)$\Rightarrow$(2) and (3)$\Rightarrow$(1). For (2)$\Rightarrow$(1),  since $R^{(\mathbb{N})}_{R}\cong (R^{(\mathbb{N})}_{R}\oplus R^{(\mathbb{N})}_{R})$ and $R^{(\mathbb{N})}_{R}$ is a C3 module, $R^{(\mathbb{N})}_{R}\oplus R^{(\mathbb{N})}_{R}$ is also a C3 module. According to Lemma 2.11, $R^{(\mathbb{N})}_{R}$ is a C2 module. This shows that $R$ is a right countably $\Sigma$-C2 ring. By Theorem 2.7 and Lemma 2.12,  (1)$\Leftrightarrow$(7), (2)$\Leftrightarrow$(8), (3)$\Leftrightarrow$(9) and (4)$\Leftrightarrow$(10). So (1)$\Leftrightarrow$(2)$\Leftrightarrow$(7)$\Leftrightarrow$(8). By Lemma 2.11, (3)$\Leftrightarrow$(4), (5)$\Leftrightarrow$(6) and (11)$\Leftrightarrow$(12).  Since every projective module is a direct summand of a free module and a direct summand of a C2 (C3) module is always a C2 (C3) module, it is easy to see that (3)$\Leftrightarrow$(4)$\Leftrightarrow$(5)$\Leftrightarrow$(6).  Thus (3)$\Leftrightarrow$(4)$\Leftrightarrow$(5)$\Leftrightarrow$(6)$\Leftrightarrow$(9)$\Leftrightarrow$(10).  (11)$\Leftrightarrow$(13)$\Leftrightarrow$(14)$\Leftrightarrow$(15) is obtained by \cite[Theorem 6.3]{B60}. Next we only need to prove (1)$\Rightarrow$(11) and (11)$\Rightarrow$(3).\\
\indent i)(1)$\Rightarrow$(11).  Assume $R$ is a right countably $\Sigma$-C2 ring, then $R$ is right finitely $\Sigma$-C2. Now we show that $R$ is  a right perfect ring. By \cite[Theorem 28.4]{AF92}, we only need to show that $R$ satisfies $DCC$ on principal left ideals of $R$. The following method is owing to Bass. Let $Ra_{1}\supseteq Ra_{2}a_{1}\supseteq \cdots$ be any descending chain of principal left ideals of $R$. Set $F=R^{(\mathbb{N})}_{R}$ with free basis $x_{1}, x_{2},\ldots$ and $G$ be the submodule of $F$ spanned by $y_{i}=x_{i}-x_{i+1}a_{i}, i\in\mathbb{N}$. By \cite[Lemma 28.1]{AF92}, $G$ is free with basis $y_{1}, y_{2},\ldots$. Thus $G\cong F$. Since $R$ is right countably $\Sigma$-C2, $F$ is a C2 module. This implies that $G$ is a direct summand of $F$. Then by \cite[Lemma 28.2]{AF92}, the chain $Ra_{1}\supseteq Ra_{2}a_{1}\supseteq \cdots$ terminates.\\
\indent ii) (11)$\Rightarrow$(3). We only need to show that if $A$ is an infinite set, then $R^{(A)}_{R}$ is a C2 module. Assume $K$ is a submodule of the free module $F=R^{(A)}_{R}$ and $K$ is isomorphic to a direct summand of $F$. In order to show that $K$ is also a direct summand of $F$,  we only need to prove that $F/K$ is  a projective  $R$-module. Since $R$ is right perfect, by \cite[Theorem 28.4]{AF92}, every flat right $R$-module is projective. Thus, we just need to show that $F/K$ is flat. As $R$ is a right perfect ring, $R$ is a semiperfect ring. Then $R$ has a basic set of primitive idempotents $e_{1}, \cdots, e_{m}$. Since $K$ is projective, by \cite[Theorem 27.11]{AF92}, there exist sets $A_{1}, \cdots, A_{m}$ such that $K\cong (e_{1}R)^{(A_{1})}\oplus\cdots\oplus (e_{m}R)^{(A_{m})}$. Set $\lambda$=card($A$). Since $K$ is isomorphic to a direct summand of $F$, $K$ is $\lambda$-generated. So each $(e_{i}R)^{(A_{i})}$ is also $\lambda$-generated, $i=1,2,\ldots, m$. As $\lambda$ is an infinite cardinality, by \cite[Lemma 25.7]{AF92}, card$(A_{i})\leq\lambda$, $i=1,2,\ldots, m$. So card($A_{1}$)+$\cdots$+card($A_{m}$)$\leq m\lambda=\lambda$. Set $L=(e_{1}R)^{(A_{1})}\oplus\cdots\oplus (e_{m}R)^{(A_{m})}$. Then $L$ can be considered as a direct summand of $F$. Let $\mathfrak{A}=\{L_{\alpha}\leq_{\oplus}L: L_{\alpha}\cong(e_{1}R)^{(A_{\alpha_{1}})}\oplus\cdots\oplus (e_{m}R)^{(A_{\alpha_{m}})}$ and card($A_{\alpha_{1}}$)+$\cdots$+card($A_{\alpha_{m}}$) is finite\}. It is clear that $L=\bigcup_{L_{\alpha}\in \mathfrak{A}}L_{\alpha}$ and, for any left ideal $I$ of $R$, $LI=\bigcup_{L_{\alpha}\in \mathfrak{A}}L_{\alpha}I$. Now let $f$ be the isomorphism from $K$ to $L$.  Set $\mathfrak{B}=\{K_{\alpha}=f^{-1} (L_{\alpha}): L_{\alpha}\in \mathfrak{A}\}$. Since $K$ is isomorphic to $L$,  $K=\bigcup_{K_{\alpha}\in \mathfrak{B}}K_{\alpha}$ and, for any left ideal $I$ of $R$, $KI=\bigcup_{K_{\alpha}\in \mathfrak{B}}K_{\alpha}I$.  As $R$ is right finitely $\Sigma$-C2 and  $L_{\alpha}$ is a finitely generated direct summand of $L$ for each $L_{\alpha}\in \mathfrak{A}$, it is easy to verify that $K_{\alpha}$ is a direct summand of $F$ for each $K_{\alpha}\in\mathfrak{B}$.  At last we apply Lemma 2.10 to show that $F/K$ is a flat module. Let $I$ be any left ideal of $R$, by Lemma 2.10, $K_{\alpha}\cap FI=K_{\alpha}I$, $K_{\alpha}\in \mathfrak{B}$. Then $K\cap FI=(\bigcup_{K_{\alpha}\in \mathfrak{B}}K_{\alpha})\cap FI=\bigcup_{K_{\alpha}\in \mathfrak{B}}(K_{\alpha}\cap FI)=\bigcup_{K_{\alpha}\in \mathfrak{B}}K_{\alpha}I=KI$. Thus, by Lemma 2.10, $F/K$ is flat.
\end{proof}

\begin{rem}
 {\rm According to the above theorem, we have\\
 (a) Every right countably $\Sigma$-C2 ring is a right perfect ring. But the converse is not true by  Example 2.3.\\
 (b) It can be directly seen from (15) that being right  $\Sigma$-C2 is a Morita invariant.\\
 (c) Since (11)$\Leftrightarrow$(15) is obtained by \cite[Theorem 6.3]{B60}, the proof of (11)$\Rightarrow$(3) above can be replaced by the following proof of (15)$\Rightarrow$(3). We only need to show that if $A$ is an infinite set, then $R^{(A)}_{R}$ is a C2 module. Assume $K$ is a submodule of the free module $F=R^{(A)}_{R}$ and $K$ is isomorphic to a direct summand of $F$. In order to show that $K$ is also a direct summand of $F$,  we only need to prove that $F/K$ is  a projective  $R$-module. Since $Pd_{R}(F/K)\leq 2$ and r$FPD(R)$=0, $Pd_{R}(F/K)=0$. So $F/K$ is  a projective $R$-module.\\
 (d) The ``right perfect'' in (11) can not be replaced by ``semiperfect''. By \cite[Example 3]{NY97}, there is a commutative, semiperfect, simple-injective, Kasch ring that is not self-injective. Then this ring is a semiperfect and finitely $\Sigma$-C2 ring that is not perfect. If $R$ is perfect, by \cite[Proposition 3]{NY97}, $R$ is a QF ring. This shows that $R$ is self-injective. It is a contradiction.}
\end{rem}

\section*{\bf ACKNOWLEDGEMENTS}
\indent The article was written during the first author's visiting  Center of Ring Theory and Its Applications in Department of
Mathematics, Ohio University. He would like to thank the center for the hospitality. The authors are grateful to Professor Dinh Van Huynh, Professor Sergio R. L$\acute{o}$pez-Permouth and Dr Gangyong Lee for their helpful suggestions.

\end{document}